 
\documentclass{amsart}

\usepackage{graphicx}
\usepackage{hyperref}
\usepackage{amsmath}
\usepackage{amssymb}
\usepackage{enumitem}

\usepackage[all]{xy}
\usepackage{pgf,tikz}
\usetikzlibrary{arrows}

\usepackage{xcolor}

\newtheorem{theorem}{Theorem}[section]

\newtheorem{corollary}[theorem]{Corollary}

\newtheorem{proposition}[theorem]{Proposition}
\theoremstyle{definition}

\theoremstyle{remark}
\newtheorem{remark}[theorem]{Remark}

\numberwithin{equation}{section}

\begin{document}

\title[Minimal number of singular fibers in Lefschetz fibrations over
  $T^2$]{On the minimal number of singular fibers in Lefschetz fibrations
  over the torus}

\author{Andr\'as I. Stipsicz} \address{MTA, R\'enyi institute of
  Mathematics, Budapest, Hungary} \email{stipsicz.andras@renyi.mta.hu}
\thanks{} 
\author{Ki-Heon Yun} \address{Department of mathematics,
  Sungshin Women's University, Seoul 02844, Korea}
\email{kyun@sungshin.ac.kr}%
\thanks{We would like to thank the referee for many valuable comments and F. Catanese for pointing out an error in an earlier version of this paper. AS was partially supported by ERC Advanced Grant LDTBud and
by the \emph{Momentum} program of the Hungarian Academy of Sciences.
KY was partially supported
by the Basic Science Research Program through the National Research
Foundation of Korea (NRF) funded by the Ministry of Education
(2015R1D1A1A01058941) and by Sungshin Women's University study abroad
program.  }

\subjclass[2010]{57N13, 53D35}%
\keywords{ Lefschetz fibration, Mapping class group}
\date{\today}

\begin{abstract}
We show that the minimal number of singular fibers $N(g,1)$ in a
genus-$g$ Lefschetz fibration over the torus is at least $3$. As an
application, we show that $N(g, 1) \in \{ 3, 4\}$ for $g\ge 5$, $N(g, 1) \in \{3,
4,5 \}$ for $g= 3, 4$ and $N(2,1) = 7$.
\end{abstract}

\maketitle

\section{introduction}
Due to results of Donaldson and Gompf, the study of symplectic
$4$-manifolds is closely related to the study of Lefschetz pencils and
Lefschetz fibrations.  A central open problem in four-manifold theory
is the geography problem of symplectic 4-manifolds and of Lefschetz
fibrations: if $\chi $ and $\sigma$ denote the Euler characteristic
and the signature of a 4-manifold, what relations do these integers
satisfy and which pairs can be realized by (minimal) symplectic
$4$-manifolds or by Lefschetz fibrations.  In particular, it is
expected that for any symplectic 4-manifold $X$ not diffeomorphic to
the blow-up of a ruled surface we have $\chi (X)\geq 0$ (\cite[page
  579]{Gompf:1995} and \cite[Conjecture~2.10]{Stipsicz:TAIA-2002}),
and for a Lefschetz fibration over the sphere we
have $\sigma \leq 0$ --- cf.
\cite[Problems~7.4~and~7.5]{Korkmaz-Stipsicz:IRMA-2009}.

Even though there is significant
progress in signature computations \cite{Endo:MathAnn-2000,
  Ozbagci:PJM-2002, Smith:GT-1999}, and the signature of a Lefschetz
fibration is algorithmically computable, its global properties are not
known except for hyperelliptic Lefschetz fibrations.  

The minimal
number of singular fibers of Lefschetz fibrations seems to be closely
related to these questions; this problem has been extensively studied
in \cite{Baykur-Korkmaz:arxiv-2015, Hamada:MMJ-2014,
  Korkmaz-Ozbagci:PAMS-2001, Korkmaz-Stipsicz:IRMA-2009,
  Monden:TokyoJ-2012, Stipsicz:TAIA-2002}.  (In what follows, we will
always assume that the fibration map is injective on the set of
critical points, hence the number of singular points and the number of
singular fibers coincide. To avoid trivial examples, we will also
assume that all Lefschetz fibrations are relatively minimal and admit
at least one singular fiber.)  
Suppose that $\Sigma _h$ denotes the
closed, oriented surface of genus $h$.  Let the positive integer
$N(g,h)$ be defined as
\begin{align*}
N&(g, h) =  \\ & \text{min} \{ k \ | \ 
\text{there is a genus-}g \text{ Lefschetz fibration over } \Sigma_h 
\text{ with } k>0\text{ singular fibers} \}. 
\end{align*}
The value of $N(g, h)$ has already been computed except $N(g, 0)$ for
$g\ge 3$, $N(g,1)$ for $g\ge 2$ and $N(2,2)$. Recently Baykur and
Korkmaz~\cite{Baykur-Korkmaz:arxiv-2015} found an interesting relation
in the mapping class group $\mathcal{M}_2^1$ of the genus-$2$ surface 
with one boundary component, and by using this relation they showed
that $N(2,0)= 7$. Furthermore, using the $8$-holed torus relation~\cite{Korkmaz-Ozbagci:MMJ-2008}
and the Matsumoto-Cadavid-Korkmaz relation
~\cite{Cadavid:Thesis-1998, Korkmaz:IMRN-2001, Matsumoto:1996},
Hamada~\cite{Hamada:MMJ-2014}  found an upper
bound for $N(g,1)$: he showed that $N(g,1)\leq 4$ if $g\geq 5$ and 
$N(g,1)\leq 6$ if $g=3,4$.

In this paper we estimate $N(g,1)$ further by using some
constraints on the signature of a Lefschetz fibration over the torus
$T^2$.

\begin{theorem}\label{Theorem:LowerBound}
Let $f\colon X \to T^2$ be a genus-$g$ Lefschetz
fibration. Then $f$ has at least three
singular fibers, that is, $N(g, 1) \ge 3$ for $g \ge 1$.
\end{theorem}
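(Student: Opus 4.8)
The plan is to show that a relatively minimal genus-$g$ Lefschetz fibration $f\colon X\to T^2$ cannot have exactly one or exactly two singular fibers. I would first dispose of $g=1$: here $\mathcal M_1\cong SL(2,\mathbb Z)$, whose abelianization is $\mathbb Z/12$, and every positive Dehn twist about a (necessarily nonseparating) simple closed curve maps to a generator of $\mathbb Z/12$; hence a product of one or two such twists is nontrivial in $\mathbb Z/12$ and in particular is not a commutator. Since the monodromy of a genus-$g$ Lefschetz fibration over $T^2$ with vanishing cycles $\gamma_1,\dots,\gamma_n$ gives a relation $t_{\gamma_1}\cdots t_{\gamma_n}=[\phi,\psi]$ in $\mathcal M_g$, this excludes $g=1$ with $n\in\{1,2\}$. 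From now on assume $g\ge 2$ and, for contradiction, $n\in\{1,2\}$.

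Next I would record the numerical invariants. Since $\chi(T^2)=0$ and each Lefschetz singular fiber raises the Euler characteristic by one, $\chi(X)=n$. The fibration yields an epimorphism $\pi_1(X)\twoheadrightarrow\pi_1(T^2)$, so $b_1(X)\ge 2$ and $X$ is not rational; since $f$ is relatively minimal and $g\ge 2$, the total space $X$ is minimal; and a minimal ruled surface over $\Sigma_h$ has $\chi=4(1-h)\le 0<n$, so $X$ is not ruled. By Gompf's theorem $X$ carries a symplectic structure (the fiber is homologically essential, the base having positive genus), so $X$ is minimal symplectic and neither rational nor ruled; therefore $K_X^2=2\chi(X)+3\sigma(X)\ge 0$, i.e.\ $\sigma(X)\ge-\tfrac{2n}{3}$. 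Being symplectic, $X$ has $1-b_1(X)+b^+(X)$ even, hence $\chi(X)+\sigma(X)\equiv 0\pmod 4$, i.e.\ $\sigma(X)\equiv -n\pmod 4$. Combining, $n=1$ forces $\sigma(X)\ge 3$ and $n=2$ forces $\sigma(X)\ge 2$; so in all cases $\sigma(X)\ge 2>0$.

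The remaining task is to contradict this by bounding $\sigma(X)$ from above, which I would do with Novikov additivity and Meyer's signature cocycle $\tau_g$. Cutting $T^2$ along an essential simple closed curve disjoint from the singular values realizes $X$ as $f^{-1}(A)$ glued to itself along a $\Sigma_g$-bundle over $S^1$, where $A$ is an annulus carrying all the singular values, so $\sigma(X)=\sigma(f^{-1}(A))$. Deleting disk neighbourhoods of the singular values exhibits $f^{-1}(A)$ as a surface bundle over a planar surface with $n$ extra boundary circles, capped off by the neighbourhoods $W_1,\dots,W_n$ of the singular fibers; using a pants decomposition in which each of these extra circles lies on its own pair of pants, one gets $\sigma(X)=\sum_{i=1}^n\tau_g(\cdot,\cdot)+\sum_{i=1}^n\sigma(W_i)$, where the $i$-th cocycle term has one argument a conjugate of the Dehn twist $t_{\gamma_i}$. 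A direct homological computation of the neighbourhood $W_i$ gives $\sigma(W_i)=0$ when $\gamma_i$ is nonseparating and $\sigma(W_i)=-1$ when it is separating; meanwhile $t_{\gamma_i}$ acts on $H_1(\Sigma_g)$ through a rank-one transvection (nonseparating) or trivially (separating), and since $\lvert\tau_g(A,B)\rvert$ is bounded by the rank of $B-\mathrm{id}$ each cocycle term lies in $\{-1,0,1\}$ and vanishes for separating $\gamma_i$. Hence $\sigma(X)\le n$, with equality only if every vanishing cycle is nonseparating and every cocycle term is maximal. For $n=1$ this already yields $\sigma(X)\le 1<3$, the desired contradiction.

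For $n=2$ the estimate gives $\sigma(X)\le 2$, so the sole remaining possibility is $\sigma(X)=2$ with both vanishing cycles nonseparating, and \emph{this borderline case is where I expect the real difficulty to lie}. To rule it out one could invoke the non-positivity of the signature of Lefschetz fibrations over $T^2$, if available, which contradicts $\sigma(X)\ge 2$ outright; otherwise one must analyse the equality case of the cocycle estimate, where the two Meyer terms $\tau_g(\psi,t_{\gamma_1})$ and $\tau_g(\psi t_{\gamma_1},t_{\gamma_2})$ are both equal to $1$, and show that this is incompatible with the relation $t_{\gamma_1}t_{\gamma_2}=[\phi,\psi]$ (equivalently $[\Phi,\Psi]=T_{\gamma_1}T_{\gamma_2}$ in $Sp(2g,\mathbb Z)$) by a Maslov-index computation, or alternatively rule out the numerics $c_1^2(X)=10=5\chi(X)$ via a Bogomolov--Miyaoka--Yau-type inequality for Lefschetz fibrations over the torus. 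The preceding mapping-class-group and four-manifold input is standard; the crux is this signature bookkeeping for two nonseparating vanishing cycles.
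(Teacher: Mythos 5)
Your reduction to numerics (relative minimality and Gompf/Taubes--Li giving $c_1^2(X)\ge 0$, i.e.\ $\sigma(X)\ge -\tfrac{2}{3}k$, plus the almost-complex parity $4\mid(\sigma(X)+\chi(X))$) is exactly the paper's starting point, and your Meyer-cocycle bookkeeping plausibly disposes of $k=1$ (which was in any case known). But the substance of the theorem is $k=2$, and there your argument stops precisely at the critical case $(n,s)=(2,0)$, $\sigma(X)=2$, which you admit you cannot exclude. Neither fallback you float is available: ``non-positivity of the signature of Lefschetz fibrations over $T^2$'' is simply false --- by Remark~\ref{Remark:Catanese} the Cartwright--Steger surface yields a genus-$19$ Lefschetz fibration over $T^2$ with three singular fibers and $\sigma=1$ --- and the BMY-type inequality of Theorem~\ref{Theorem:Li} is the non-strict $c_1^2\le 5c_2$, which permits exactly the borderline numerics $c_1^2=10=5\chi$ that you would need to rule out. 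So, as written, the proof has a genuine gap at its essential point.

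The paper closes this gap by a different mechanism: Proposition~\ref{Proposition:SigmaEstimate} proves the strictly sharper upper bound $\sigma(X)\le n-s-1$, not via Meyer's cocycle but by pulling the fibration back along $m$-fold unbranched covers of the base torus. The point is that $b_1$ of the pullback stays bounded by $2g+2$ while $\chi$, $\sigma$ and the number of singular fibers scale by $m$; each separating vanishing cycle gives a $(-1)$-surface, and each block of $2g+1$ nonseparating vanishing cycles is linearly dependent in $H_1(F;\mathbb{Z})$ and hence (using Lefschetz thimbles and a Wall non-additivity computation of framings) produces a closed surface of negative self-intersection. This yields $b_2^-\ge ms+\lfloor mn/(2g+1)\rfloor$, hence $\sigma(X)\le k-2s-\tfrac{2n}{2g+1}$ for all $m$, and integrality together with $n>0$ (Theorem~\ref{Theorem:Braungardt-Kotschick}) gives the extra $-1$. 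That extra unit is exactly what kills your remaining case: for $(n,s)=(2,0)$ it forces $\sigma(X)\le 1$, incompatible with $\sigma(X)\equiv 2\pmod 4$ and $\sigma(X)\ge -\tfrac43$. To salvage your route you would have to carry out the equality analysis of the Meyer terms (or some other strictness argument) that your proposal leaves open; the covering/negative-surface argument is the paper's substitute for that missing step.
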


By lifting the Baykur-Korkmaz relation to
$\mathcal{M}_2^2$ we also show 
\begin{theorem}\label{Theorem:UpperBound}
If $g\geq 3$, then there is a genus-$g$
Lefschetz fibration over the torus with $5$ singular
fibers, that is, $N(g, 1) \le 5$ for $g \ge 3$.
\end{theorem}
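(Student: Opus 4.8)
The plan is to produce, for every $g\ge 3$, a relation in the mapping class group $\mathcal{M}_g$ of the closed genus-$g$ surface of the form
\[
 [\phi,\psi] = t_{e_1}t_{e_2}t_{e_3}t_{e_4}t_{e_5},
\]
with $\phi,\psi\in\mathcal{M}_g$ and $e_1,\dots,e_5$ essential simple closed curves: such a relation is precisely the monodromy factorisation of a relatively minimal genus-$g$ Lefschetz fibration over $T^2$ with five singular fibers. The input is the Baykur--Korkmaz relation $t_{c_1}t_{c_2}\cdots t_{c_7}=t_\delta$ in $\mathcal{M}_2^1$, which underlies the genus-$2$ Lefschetz fibration over $S^2$ realising $N(2,0)=7$ (and which carries a $(-1)$-section, explaining the boundary twist $t_\delta$).

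First I would transport the Baykur--Korkmaz relation into $\mathcal{M}_2^2$, the genus-$2$ surface with two boundary circles $\delta_1,\delta_2$. Examining how the curves $c_i$ can be positioned with respect to a second, suitably placed boundary circle, and applying Hurwitz moves together with a chain relation, I expect to obtain a relation in $\mathcal{M}_2^2$ of the shape $t_{e_1}t_{e_2}t_{e_3}t_{e_4}t_{e_5}=R$, where $R$ is a short word in $t_{\delta_1}^{\pm1}$ and $t_{\delta_2}^{\pm1}$. Passing to a second boundary component is what creates the room to trade two of the seven singular fibers of the Baykur--Korkmaz fibration for the genus of the base; the choice of curves here must be made with care, because the value $N(2,0)=7$ obstructs the naive versions of such a relation (for instance one in which all five $e_i$ stayed essential after capping off $\delta_1$ and $\delta_2$).

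Next, for each $g\ge 3$ I would embed $\Sigma_2^2$ into the closed genus-$g$ surface $\Sigma_g$ by gluing a surface of genus $g-3$ with two boundary circles onto $\Sigma_2^2$ along $\delta_1$ and $\delta_2$ (when $g=3$ this amounts to attaching a handle joining the two boundary components); an Euler-characteristic count confirms the result is $\Sigma_g$, and under this gluing $\delta_1$ and $\delta_2$ become non-separating curves of $\Sigma_g$ lying in a single $\mathcal{M}_g$-orbit. The induced homomorphism $\mathcal{M}_2^2\to\mathcal{M}_g$ pushes the relation forward, and now, choosing $\psi\in\mathcal{M}_g$ carrying (the image of) $\delta_1$ to (the image of) $\delta_2$ and using only the identity $\psi t_c\psi^{-1}=t_{\psi(c)}$, the word $R$ collapses into a single commutator $[\phi,\psi]$ for a suitable $\phi$. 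This produces the desired relation in $\mathcal{M}_g$, hence $N(g,1)\le 5$ for $g\ge 3$; the restriction to $g\ge 3$ is essential, since $N(2,1)=7$.

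The main obstacle is the second step: pinning down the precise five-twist relation in $\mathcal{M}_2^2$, with $R$ already in commutator-ready form. This is an explicit and somewhat delicate mapping-class-group computation --- one must track how the seven vanishing cycles of the Baykur--Korkmaz relation meet the added boundary circle, find the correct sequence of Hurwitz moves and the right chain (or lantern-type) relation to convert two of the twists into $R$, and then check that after the genus increase all five vanishing cycles are essential (so the fibration is relatively minimal) and all five twists remain positive (so that it really is a Lefschetz fibration). Once this relation is in hand, the genus increase and the reduction to a commutator are routine.
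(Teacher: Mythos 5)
Your overall outline --- lift the Baykur--Korkmaz relation to $\mathcal{M}_2^2$, glue $\Sigma_2^2$ into $\Sigma_g$ along its two boundary circles, and turn the leftover boundary twists into a commutator --- is indeed the paper's strategy, and you correctly identify the explicit mapping class group computation as the hard part. But the intermediate relation you are aiming for cannot exist, so the plan breaks at its central step. You want $t_{e_1}t_{e_2}t_{e_3}t_{e_4}t_{e_5}=R$ in $\mathcal{M}_2^2$ with all five factors positive twists about essential curves and $R$ a word in $t_{\delta_1}^{\pm1},t_{\delta_2}^{\pm1}$ alone, to be collapsed to $[\phi,\psi]$ using only $\psi t_c\psi^{-1}=t_{\psi(c)}$ for a $\psi$ carrying $\delta_1$ to $\delta_2$; since $t_{\delta_1}$ and $t_{\delta_2}$ commute, this forces $R=t_{\delta_1}^{a}t_{\delta_2}^{-a}$. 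Now cap off both boundary circles with disks: any $e_i$ that stays essential in the closed $\Sigma_2$ becomes a vanishing cycle of a relatively minimal genus-$2$ Lefschetz fibration over $S^2$ with at most five singular fibers, impossible since $N(2,0)=7$. Hence every $e_i$ must die under capping, i.e.\ each is parallel to $\delta_1$, parallel to $\delta_2$, or cobounds a pair of pants with $\delta_1\cup\delta_2$. Capping off only $\delta_2$ sends the relation to $t_{\delta_1}^{m_1}=t_{\delta_1}^{a}$ in $\mathcal{M}_2^1$ (the boundary twist is central of infinite order), where $m_1\ge 0$ counts the $e_i$ of the first and third kinds, so $a=m_1\ge0$; capping off only $\delta_1$ gives $-a=m_2\ge0$ for the second and third kinds. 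Every $e_i$ is counted at least once, so $0=a+(-a)=m_1+m_2\ge 5$, a contradiction. The obstruction you noticed from $N(2,0)=7$ thus kills every version of your intermediate relation, not just the ``naive'' one.

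The repair --- and what the paper actually does --- is to keep all seven positive twists upstairs: the lift (Proposition~3.3) reads $t_{x_1}t_{x_2}t_{x_3}t_{x_4}t_{y_1}t_{y_2}t_{y_3}=t_{\delta_0}t_{\delta_2}$ in $\mathcal{M}_2^2$, and the drop from seven to five singular fibers happens only in $\mathcal{M}_g$, where the commutator swallows two of the \emph{interior} positive twists together with the two boundary twists. Concretely, after gluing an annulus along $\delta_0$ and $\delta_2$ to form $\Sigma_3$, the pairs $(\delta_0,x_2)$ and $(x_1,\delta_2)$ are both pairs of disjoint nonseparating curves with connected complement, so by change of coordinates there is $\psi$ with $\psi(\delta_0)=x_1$ and $\psi(x_2)=\delta_2$, whence $t_{x_3}t_{x_4}t_{y_1}t_{y_2}t_{y_3}=t_{x_2}^{-1}t_{x_1}^{-1}t_{\delta_0}t_{\delta_2}=[t_{\delta_0}t_{x_2}^{-1},\psi]$; this is the Korkmaz--Ozbagci/EKKOS trick applied to two pairs of curves, whereas your $\psi$, acting only on the two boundary curves, is too weak. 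With the seven-twist lift as the intermediate goal and this stronger collapsing step, the remainder of your plan (gluing a twice-punctured genus-$(g-3)$ surface for $g\ge4$, checking that the five surviving vanishing cycles are essential) goes through as you describe.
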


When combined these two theorems with sharper results of
Hamada~\cite{Hamada:MMJ-2014} and some special properties of 
the mapping class group of the genus-$2$ surface, we get:

\begin{corollary}\label{Corollary:N}
For the minimal number $N(g,1)$ of singular fibers in a genus-$g$
Lefschetz fibration over the torus we have
\begin{itemize}
\item[(1)] $3 \le N(g, 1) \le 4$ for $g \ge 5$,
\item[(2)] $3\le N(g, 1) \le 5$ for $g = 3, 4$, 
\item[(3)] $N(2,1) = 7$ and $N(1,1) =12$.
\end{itemize}
\end{corollary}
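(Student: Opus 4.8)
The plan is to prove Corollary~\ref{Corollary:N} by assembling Theorems~\ref{Theorem:LowerBound} and~\ref{Theorem:UpperBound} with the existing results of Hamada and some genus-$2$ specific facts. The lower bound $N(g,1)\ge 3$ for all $g\ge 1$ is immediate from Theorem~\ref{Theorem:LowerBound}. For part~(1), the upper bound $N(g,1)\le 4$ for $g\ge 5$ is exactly Hamada's result quoted in the introduction, so (1) follows at once. For part~(2), Hamada's bound only gives $N(g,1)\le 6$ for $g=3,4$, which is weaker than what is claimed; here I would instead invoke Theorem~\ref{Theorem:UpperBound}, which yields $N(g,1)\le 5$ for all $g\ge 3$, in particular for $g=3,4$. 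Combined with the lower bound this gives $3\le N(g,1)\le 5$.

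The substantive part is~(3). For $g=1$, a genus-$1$ Lefschetz fibration over $T^2$ has a fiber sum decomposition only in a restricted sense, but the Euler characteristic identity $\chi(X)=\chi(T^2)\chi(\Sigma_1)+(\text{number of singular fibers})=0+n=n$ together with the fact that a relatively minimal genus-$1$ Lefschetz fibration is (after blow-ups) a torus fibration forces divisibility/monodromy constraints: the product of the corresponding Dehn twists must be trivial in $\mathcal{M}_1=SL(2,\mathbb{Z})$ up to a power of the boundary twist coming from the torus base, and the standard fact that twelve vanishing cycles are needed (as in the rational elliptic surface $E(1)$, whose restriction over a torus in the base already witnesses $n=12$, while no relation with fewer than $12$ right-handed Dehn twists exists representing the relevant element) gives $N(1,1)=12$. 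I would state this by citing the classification of genus-$1$ Lefschetz fibrations (Moishezon--Livne) and the computation already present in the literature ($N(1,1)=12$ appears in \cite{Korkmaz-Ozbagci:PAMS-2001}).

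For $g=2$, the lower bound from Theorem~\ref{Theorem:LowerBound} is only $3$, so to reach $N(2,1)=7$ I need both a sharper lower bound and a matching construction. The upper bound $N(2,1)\le 7$ should follow from the Baykur--Korkmaz genus-$2$ relation in $\mathcal{M}_2^1$ (the one giving $N(2,0)=7$) together with the lifting/fiber-sum technique used for Theorem~\ref{Theorem:UpperBound}: gluing the relevant fibration over a disk to produce a fibration over $T^2$, or directly exhibiting a word of seven right-handed Dehn twists in $\mathcal{M}_2^1$ that, when the boundary is capped appropriately, yields a genus-$2$ fibration over $T^2$. The matching lower bound $N(2,1)\ge 7$ is where the ``special properties of the mapping class group of the genus-$2$ surface'' enter: one uses that $\mathcal{M}_2$ is hyperelliptic, so the fibration is hyperelliptic and its signature is computed by the Endo/Matsumoto formula; feeding $\chi(X)=n$ and the signature formula into the constraint on Lefschetz fibrations over $T^2$ (the same signature constraint underlying Theorem~\ref{Theorem:LowerBound}) rules out $n\in\{3,4,5,6\}$, leaving $n\ge 7$.

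The main obstacle I expect is the genus-$2$ lower bound $N(2,1)\ge 7$: it requires combining the hyperelliptic signature formula with the torus-base signature constraint and then doing a finite case analysis over the possible numbers $n=3,4,5,6$ of nonseparating versus separating vanishing cycles, checking in each case that the resulting arithmetic (integrality of signature, the bound $\sigma\le 0$ or the sharper inequality from Theorem~\ref{Theorem:LowerBound}'s proof, and positivity/torsion constraints in $\mathcal{M}_2$) is violated. This parallels the Baykur--Korkmaz argument for $N(2,0)=7$ but must be redone over the torus, where $\chi$ and $\sigma$ shift by the contribution of the base; everything else is bookkeeping once the relevant relation in $\mathcal{M}_2^1$ (or its lift) is in hand.
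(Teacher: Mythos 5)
Parts (1) and (2) of your plan coincide with the paper's proof: the lower bound is Theorem~\ref{Theorem:LowerBound}, the upper bound for $g\ge 5$ is Hamada's, and for $g=3,4$ it is Theorem~\ref{Theorem:UpperBound}. The statement $N(1,1)=12$ is likewise just quoted from the literature. The issues are all in the genus-$2$ case.

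First, your proposed derivation of the upper bound $N(2,1)\le 7$ from the Baykur--Korkmaz relation via the lifting technique does not work as described: the lift of Proposition~\ref{Proposition:Lifting-1} lives in $\mathcal{M}_2^2$, and capping the two boundary components (by a cylinder or a punctured surface) produces fibrations of fiber genus at least $3$, never genus $2$. The paper simply quotes the known bound $6\le N(2,1)\le 7$ from the literature, and you should too. Second, and more importantly, the lower bound $N(2,1)\ge 7$ --- which you correctly identify as the substantive point --- is left as an unexecuted case analysis, and the tool you name (the Endo hyperelliptic signature formula) is not what the paper uses and would itself require justification over a torus base. The paper's actual argument is short once stated: since $N(2,1)\ge 6$ is known, only $k=6$ must be excluded; because $H_1(\mathcal{M}_2)\cong\mathbb{Z}/10\mathbb{Z}$ with a nonseparating twist mapping to $1$ and a separating twist to $2$, the monodromy being a commutator forces $n+2s\equiv 0\pmod{10}$, which for $n+s=6$ leaves only $(n,s)=(2,4)$; Proposition~\ref{Proposition:SigmaEstimate} then gives $-4\le\sigma(X)\le -3$ while the almost-complex condition requires $4\mid(\sigma(X)+6)$, a contradiction. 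Your sketch gestures at the right ingredients (hyperellipticity plus the signature constraints of Proposition~\ref{Proposition:SigmaEstimate}), so the plan is salvageable, but as written the decisive arithmetic step --- pinning down $(n,s)=(2,4)$ and exhibiting the contradiction --- is missing, and that step is the entire content of $N(2,1)=7$.
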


\begin{remark}\label{Remark:Catanese}
In ~\cite{Cartwright-Steger:2010}, the existence of a complex surface $S$ with $b_1(S) = 2$, $\sigma(S) =1$ and $\chi(S) =3$ is shown.
The Albanese map $S\to T^2$ is studied in~\cite{Cartwright:2014aa}, where it has been shown that this map can be perturbed to a symplectic Lefschetz fibration of genus $19$ with exactly $3$ singular fibers. When combined this construction with the result above, we get $N(19, 1)= 3$.  The exact value of $N(g,1)$ for other values of $g$ is still open.
\end{remark}
\section{A lower bound on $N(g,1)$}

We start our discussion by recalling some relevant results. 
(For basic notions and constructions of Lefschetz fibrations see
\cite[Chapter~8]{GS}. Unless otherwise stated, 
in the following we will consider
only relatively minimal Lefschetz fibrations with at least one
singular fiber, and will assume that the fibration map is injective on
the set of critical points.)

\begin{theorem}[\cite{Li:IMRN-2000, Stipsicz:PAMS-2000, Stipsicz:PAMS-2000-1}] \label{Theorem:Li}
Let $X$ be a connected smooth closed oriented $4$-manifold and $f\colon X
\to B$ be a Lefschetz fibration with
fiber $F$. If $g(F) \ge 2$ and $g(B) \ge 1$, then
\begin{equation}\label{eq:ChernNumbers}
2(g(F) -1)(g(B)-1) \le c_1^2(X) \le 5 c_2(X) . 
\end{equation}
In particular, a Lefschetz fibration over the torus with fiber genus
at least 2 satisfies $0\leq c^2_1(X)\leq 5c_2(X)$. \qed
\end{theorem}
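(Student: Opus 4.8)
The plan is to view $X$ as a minimal symplectic $4$-manifold and to extract both inequalities from Seiberg--Witten-theoretic constraints on the Chern numbers of such manifolds. Since $g(F)\ge 2$, Gompf's theorem provides a symplectic form $\omega$ on $X$ for which every fiber is a symplectic submanifold; fixing a compatible almost-complex structure gives a canonical class $K=-c_1(X)$, and throughout I will use the standard identities $c_2(X)=\chi(X)$ and $c_1^2(X)=2\chi(X)+3\sigma(X)$, where $\sigma$ is the signature. In this way both asserted inequalities become statements about the pair $(\chi,\sigma)$.

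First I would establish minimality. A relatively minimal genus-$g$ Lefschetz fibration with $g(F)\ge 2$ over a base with $g(B)\ge 1$ contains no symplectic $(-1)$-sphere: an exceptional sphere $E$ would satisfy $E^2=-1$ and $K\cdot E=-1$, which, combined with relative minimality and the adjunction equality along the fibers, leads to a contradiction. Together with $g(B)\ge 1$ this also yields $b_2^+(X)>1$ (a fiber--section pair together with the classes coming from $H_1(B)\otimes H_1(F)$ already produces several self-dual classes), so $X$ is a minimal symplectic $4$-manifold in the range where Taubes's nonvanishing theorem applies.

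For the lower bound, Taubes's theorem makes $\pm K$ Seiberg--Witten basic classes; since $X$ is minimal with $b_2^+>1$ and, by $g(F)\ge 2$ and $g(B)\ge 1$, of non-negative symplectic Kodaira dimension, one obtains $c_1^2(X)=K^2\ge 0$. To upgrade this to $c_1^2(X)\ge 2(g(F)-1)(g(B)-1)$ I would feed in adjunction along the fiber, $K\cdot[F]=2g(F)-2$, the positivity $K\cdot[\omega]>0$, and the pullback $f^*K_B=(2g(B)-2)\,[F]$, and then run a light-cone (reverse Cauchy--Schwarz) estimate on the subspace of $H^2(X;\mathbb{R})$ spanned by $K$, $[F]$ and $[\omega]$. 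Note that when $g(B)=1$ the right-hand side vanishes, so over the torus this step is exactly $c_1^2(X)\ge 0$, which is all that the later sections require.

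Finally, the upper bound $c_1^2(X)\le 5c_2(X)$ is equivalent, via the two identities above, to $\sigma(X)\le\chi(X)$, i.e. to $b_2^-(X)\ge b_1(X)-1$. I would prove this from the homology of the fibration: the base contributes $b_1(B)=2g(B)$ and each vanishing cycle kills part of $H_1(F)$, which bounds $b_1(X)$ from above, while the summand $H_1(B)\otimes H_1(F)$ together with the negative part of the intersection form supplies enough classes to bound $b_2^-(X)$ from below; alternatively one argues that a minimal symplectic $4$-manifold of non-negative Kodaira dimension cannot have too positive a signature. The main obstacle, in my view, is twofold: legitimately invoking Taubes's theorem (the deep Seiberg--Witten input behind $c_1^2(X)\ge 0$) only after minimality and $b_2^+(X)>1$ have been secured, and controlling $b_1$ against $b_2^-$ sharply enough to force $\sigma(X)\le\chi(X)$ in the upper bound.
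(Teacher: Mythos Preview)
The paper does not prove this theorem: it is quoted from \cite{Li:IMRN-2000, Stipsicz:PAMS-2000, Stipsicz:PAMS-2000-1} and closed with a bare \qed. So there is no in-paper argument to compare your proposal against; any comparison is really with those references.

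Your outline is broadly in the spirit of those papers---Seiberg--Witten/Taubes input for the lower bound, elementary homology counting for the upper one---but several steps are gestures rather than arguments. Two points deserve attention. First, minimality: ``$E^2=-1$, $K\cdot E=-1$, combined with relative minimality and adjunction along the fibers, leads to a contradiction'' is not a proof. A smooth $(-1)$-sphere in a relatively minimal fibration need not violate adjunction on the fiber in any obvious way; the actual argument (in \cite{Stipsicz:PAMS-2000}) controls how an exceptional class can meet the fiber class and uses that $[F]^2=0$, $[F]\cdot[\omega]>0$ and $g(B)\ge 1$. Likewise your claim $b_2^+>1$ needs a real argument before Taubes can be invoked. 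Second, for the upper bound you correctly rewrite $c_1^2\le 5c_2$ as $b_2^-\ge b_1-1$, but neither of your two suggested routes is carried out: the ``$H_1(B)\otimes H_1(F)$ supplies enough negative classes'' line requires you to actually produce $b_1-1$ linearly independent classes of negative square, and the ``minimal symplectic with nonnegative Kodaira dimension cannot have too positive signature'' line is not a known theorem in the generality you need.

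For the purposes of this paper only the torus case matters, and there your sketch essentially reduces to: (i) $X$ is minimal symplectic with $b_2^+>1$, hence $c_1^2\ge 0$ by Taubes; (ii) $b_2^-\ge b_1-1$. If you want a self-contained argument, those two assertions are what you must actually prove.
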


\begin{theorem}[\cite{Braungardt-Kotschick:TAMS-2003, Endo-Kotschick:InventMath-2001}]\label{Theorem:Braungardt-Kotschick}
Let $X$ be a connected smooth closed oriented $4$-manifold and
$f\colon X \to B$ be a Lefschetz fibration with
fiber $F$ where $g(F) \ge 1$ and $g(B) \ge 1$. Let $n$ be the number
of nonseparating and $s$ be the number of separating vanishing
cycles. Then,
\[
s \le 6(3g(F) -1)(g(B)-1) + 5n.
\]
In particular, for a Lefschetz fibration over the torus we have $s\leq 5n$. \qed
\end{theorem}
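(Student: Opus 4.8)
The statement is interesting only when $g(F)\ge 2$: a separating simple closed curve on the torus is null-homotopic, so relative minimality forces $s=0$ when $g(F)=1$, and then the right-hand side is non-negative. So assume $g(F)\ge 2$.

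The plan is to combine a positivity statement for $c_1^2(X)$ with the identities $c_1^2(X)=2\chi(X)+3\sigma(X)$, $c_2(X)=\chi(X)$ and an upper bound for $\sigma(X)$. For the positivity: since $f$ is relatively minimal with $g(F)\ge 2$, its total space $X$ is minimal, and $X$ carries a symplectic form making the fibres symplectic (Gompf, \cite{GS}); moreover $g(B)\ge 1$ forces $f^{*}$ to be injective on $H^1$, so $b_1(X)\ge 2g(B)\ge 2$. Hence $X$ is not a rational surface, and comparing $\chi(X)=4(g(F)-1)(g(B)-1)+n+s\ge 1$ with the Euler characteristic of an irrational ruled surface (which is $\le 0$) shows $X$ is not ruled either. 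By Taubes' theorem, and its refinement for $b_2^+(X)=1$ due to Li and Liu, a minimal symplectic $4$-manifold which is neither rational nor ruled satisfies $c_1^2(X)\ge 0$, that is, $2\chi(X)+3\sigma(X)\ge 0$.

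Since $\chi(X)=\chi(F)\chi(B)+n+s=4(g(F)-1)(g(B)-1)+n+s$ is known exactly, everything reduces to bounding $\sigma(X)$ from above. I would do this by cutting $X$ along the boundary of disjoint tubular neighbourhoods of the singular fibres and applying Novikov additivity: $\sigma(X)$ equals the signature of the complementary piece $W$ --- an $F$-bundle over the surface obtained from $B$ by removing $n+s$ open discs --- plus the local signatures of the singular fibres, up to Wall non-additivity corrections along the boundary $3$-tori. A reducible singular fibre (coming from a separating vanishing cycle) has local signature exactly $-1$; the irreducible fibres, the correction terms, and $\sigma(W)$ --- the last computed by Meyer's signature cocycle on the symplectic monodromy, in which each Dehn twist is a transvection and contributes at most $1$ --- are together bounded above by a quantity linear in $n$ with coefficient $1$ and linear in $(g(F)-1)(g(B)-1)$. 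This gives $\sigma(X)\le n-s+C\,(g(F)-1)(g(B)-1)$ with an explicit $C$; substituting into $2\chi(X)+3\sigma(X)\ge 0$ and solving for $s$ produces $s\le 6(3g(F)-1)(g(B)-1)+5n$, the coefficient $5=2+3$ of $n$ being forced and $6(3g(F)-1)$ accommodating $8(g(F)-1)$ together with three times the bound on the horizontal contribution.

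I expect the signature estimate in the last paragraph to be the main obstacle: one has to know the local signature $-1$ of a reducible Lefschetz fibre, control the Wall non-additivity terms along the $T^3$'s, and estimate Meyer's cocycle sharply enough that the constants come out to exactly $6(3g(F)-1)$ and $5$. Everything else is the elementary manipulation above together with the (by now standard) input $c_1^2(X)\ge 0$.
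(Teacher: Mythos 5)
This statement is quoted from \cite{Braungardt-Kotschick:TAMS-2003, Endo-Kotschick:InventMath-2001} and is not proved in the paper (note the \qed\ directly after the statement), so there is no internal proof to compare against. The closest relative inside the paper is Proposition~\ref{Proposition:SigmaEstimate}, which for base genus $1$ runs an argument of exactly your type: the lower bound $3\sigma(X)+2\chi(X)=c_1^2(X)\ge 0$ from Theorem~\ref{Theorem:Li}, plus an upper bound $\sigma(X)\le n-s-1$ obtained by exhibiting disjoint negative homology classes; combined, these give $s\le 5n-3$ over $T^2$, which is even stronger than the special case $s\le 5n$ quoted here. So your skeleton is sound for $g(B)=1$, and the first half of your argument (minimality, non-ruledness, $c_1^2\ge 0$, the Euler characteristic formula) is correct and is literally Theorem~\ref{Theorem:Li}.

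The genuine gap is the signature upper bound, which you yourself flag as ``the main obstacle'' but which is in fact the entire content of the theorem. Tracking your constants: with only $c_1^2\ge 0$ as input, matching $6(3g-1)(h-1)+5n$ forces $\sigma(X)\le n-s+\bigl(\tfrac{10}{3}(g(F)-1)+4\bigr)(g(B)-1)$, and in particular $\sigma(X)\le n-s$ when $g(B)=1$. The transvection argument does control the $n+s$ ``vertical'' terms (a nonseparating twist acts on $H_1(F)$ as a transvection, where Meyer's cocycle is bounded by $1$; a separating twist acts trivially and carries local signature $-1$), but the ``horizontal'' terms coming from the $g(B)$ commutators $[\gamma_l,\delta_l]$ are evaluations of Meyer's cocycle on arbitrary mapping classes, bounded only by its norm (of order $g(F)$ per term), and they do not disappear when $g(B)=1$. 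So the bound you need is not established by your sketch, and the constant-matching at the end is asserted rather than derived. For what it is worth, the actual proof in the cited references avoids this route entirely: Endo--Kotschick use the boundedness of Meyer's cocycle to show that a right-handed Dehn twist about a separating curve has stable commutator length at least $\tfrac{1}{6(3g-1)}$, and Braungardt--Kotschick feed the monodromy relation $a_1\cdots a_s b_1\cdots b_n=\prod_{l=1}^{g(B)}[\gamma_l,\delta_l]$ into this quasimorphism estimate, the nonseparating twists contributing the $5n$ term; that is where the constant $6(3g-1)=18g-6$ comes from, with no gauge-theoretic input.
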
 
 
The following proposition will be used to show that $N(g,1)\neq 2$.

\begin{proposition}\label{Proposition:SigmaEstimate}
Let $f\colon X \to T^2$ be a genus-$g$ Lefschetz fibration over the
torus $T^2$ which has $k$ singular fibers, $n$ of them with
nonseparating vanishing cycles and $s$ of them with separating
vanishing cycles.  Then
\[
-\frac{2}{3}k \le \sigma(X) \le n-s-1.
\]
\end{proposition}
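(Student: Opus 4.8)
The plan is to combine the two-sided Chern-number inequality of Theorem \ref{Theorem:Li} with standard formulas expressing the characteristic numbers of a Lefschetz fibration over $T^2$ in terms of the fiber genus $g$ and the number of singular fibers $k=n+s$. For a genus-$g$ Lefschetz fibration $f\colon X\to T^2$ one has $c_2(X)=\chi(X)=\chi(T^2)\chi(\Sigma_g)+k=k$, since $\chi(T^2)=0$ and each Lefschetz singularity contributes $+1$ to the Euler characteristic. Also $c_1^2(X)=2\chi(X)+3\sigma(X)=2k+3\sigma(X)$. I would first record these two identities (they are the only ``input'' from the geometry of Lefschetz fibrations, and are worked out in, e.g., \cite[Chapter~8]{GS}).

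Next I would feed these into Theorem \ref{Theorem:Li}. The upper bound $c_1^2(X)\le 5c_2(X)$ reads $2k+3\sigma(X)\le 5k$, i.e. $3\sigma(X)\le 3k$, which only gives $\sigma(X)\le k$ and is too weak; so the interesting direction is the \emph{lower} bound. For $g\ge 2$, Theorem \ref{Theorem:Li} gives $c_1^2(X)\ge 2(g-1)(1-1)=0$, hence $2k+3\sigma(X)\ge 0$, i.e. $\sigma(X)\ge -\tfrac{2}{3}k$, which is exactly the claimed left-hand inequality. For $g=1$ one argues separately: a genus-$1$ Lefschetz fibration over $T^2$ has only nonseparating vanishing cycles, is (after possibly blowing down) pulled back from a relatively minimal elliptic fibration, so $\sigma(X)$ is a nonpositive multiple of $8$ determined by $k$ via $\sigma=-k/3$ when $X$ is the corresponding torus bundle construction; in any case $\sigma(X)\ge -\tfrac23 k$ continues to hold (one can also note that for $g=1$, $\sigma(X)\le 0$ and $c_1^2(X)=0$, so $2k+3\sigma(X)=c_1^2(X)\ge 0$ still). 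I would phrase the $g=1$ case briefly, since the main application of the proposition is to $g\ge 2$.

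For the right-hand inequality $\sigma(X)\le n-s-1$, the idea is to run the Braungardt--Kotschick bound of Theorem \ref{Theorem:Braungardt-Kotschick} together with the signature's dependence on the vanishing cycles. Over the torus, Theorem \ref{Theorem:Braungardt-Kotschick} gives $s\le 5n$. Independently, each separating vanishing cycle of genus $h$ contributes a known negative amount to the signature (by Novikov additivity / Endo's work on hyperelliptic fibrations, the ``worst case'' contribution is $-1$ per separating fiber, and a nonseparating fiber contributes at most $+1$); summing over the $n+s$ singular fibers gives $\sigma(X)\le n-s$. To sharpen $\le n-s$ to $\le n-s-1$ I would use that $b_1(X)\ge b_1(T^2)=2$ forces $b^+(X)\ge 1$ and in fact the fiber and a section span a hyperbolic summand in $H_2(X)$ while the pullback of $H_1(T^2)$ contributes to $b_1$; a parity/rank bookkeeping argument (or the observation that $X$ cannot have $b^+=b^-$ balanced exactly at the value $n-s$ because the section class forces a strict gain on the negative side) yields the extra $-1$. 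The main obstacle, and the step I would spend the most care on, is this last ``$-1$'' improvement: getting $\sigma(X)\le n-s$ is routine from the local contributions, but squeezing out the strict inequality requires genuinely using that the base is $T^2$ (not $S^2$) — concretely, that the induced map $H_1(X;\mathbb{Q})\to H_1(T^2;\mathbb{Q})$ is onto, so $b_1(X)\ge 2$, which via $\chi(X)=k$ and $\chi=2-2b_1+b_2$ together with $b_2=b^++b^-$ and $\sigma=b^+-b^-$ translates into a bound on $\sigma$ that beats $n-s$ by one. I would isolate this homological computation as the crux and present the local-contribution and Braungardt--Kotschick inputs as the easy scaffolding around it.
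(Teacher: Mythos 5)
Your treatment of the left-hand inequality is exactly the paper's: $\chi(X)=k$, so $0\le c_1^2(X)=3\sigma(X)+2k$ gives $\sigma(X)\ge-\tfrac23k$. The gap is in the upper bound $\sigma(X)\le n-s-1$. First, the step you call routine --- ``each separating fiber contributes $-1$, each nonseparating fiber at most $+1$, hence $\sigma(X)\le n-s$'' --- is not available here: local signature formulas of Endo type hold only for hyperelliptic fibrations, and if you instead build $X$ by $2$-handle attachments over a disk you must still glue in the fiber bundle over the punctured torus carrying the commutator monodromy $[\gamma,\delta]$; its signature contribution is governed by Meyer's cocycle, can be positive, and is not bounded by any per-fiber count. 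Second, and more seriously, the promotion from $n-s$ to $n-s-1$ is where your argument has no real content. Writing $\sigma(X)=b_2-2b_2^-=\chi(X)-2+2b_1-2b_2^-=k-2+2b_1-2b_2^-$, the inequality $\sigma(X)\le n-s-1$ is equivalent to $b_2^-\ge s+b_1-\tfrac12$, i.e.\ to producing at least $s+2$ independent negative classes when $b_1=2$ (and more if $b_1$ is larger, which you cannot exclude since $b_1(X)\le 2g+2$ is the only a priori bound). The $s$ disjoint $(-1)$-surfaces inside the separating singular fibers give $b_2^-\ge s$, and the ``hyperbolic summand'' you invoke supplies at most one further negative class --- and presupposes a section, which a Lefschetz fibration over $T^2$ need not have. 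So the bookkeeping you describe does not close.

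The paper's actual mechanism, absent from your proposal, is a covering trick combined with an explicit construction of negative classes. Pull back along an $m$-fold unbranched cover $\phi_m\colon T^2\to T^2$: then $\chi$, $\sigma$ and the number of singular fibers all scale by $m$, while $b_1(\phi_m^*(X))\le 2g+2$ stays bounded. In the cover, any $2g+1$ of the $mn$ nonseparating vanishing cycles are linearly dependent in $H_1(F;\mathbb{Z})$ (which has rank $2g$); each such dependency yields a relative $2$-cycle in $F\times D^2$ that is capped off by Lefschetz thimbles to a closed surface of square $-\sum_v n_v^2<0$ (the fiber framing computation, via attaching $0$-framed handles and Wall non-additivity, shows the relative piece contributes $0$ and each thimble contributes $-1$). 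This gives $b_2^-(\phi_m^*(X))\ge ms+[\tfrac{mn}{2g+1}]$, hence $m\sigma(X)\le mk-2+2(2g+2)-2ms-2[\tfrac{mn}{2g+1}]$; dividing by $m$ and letting $m\to\infty$ kills the bounded terms coming from $b_0$, $b_1$, leaving $\sigma(X)\le k-2s-\tfrac{2n}{2g+1}$, and integrality of $\sigma(X)$ together with $n>0$ (from Theorem~\ref{Theorem:Braungardt-Kotschick}) then yields $\sigma(X)\le k-2s-1=n-s-1$. Without this limiting argument --- or some substitute producing $s+b_1$ negative classes directly in $X$ --- your approach does not prove the stated upper bound.
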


\begin{proof}
Note first that for a Lefschetz fibration $f\colon X\to B$
of fiber genus $g$ and base genus
$h$ and with $k$ singular fibers we have $\chi (X)=4(g-1)(h-1)+k$;
in particular if $B$ is diffeomorphic to the torus, then
$\chi (X)=k$.

The first inequality of the proposition now follows from
Equation~\eqref{eq:ChernNumbers}, as
\[
0\leq c_1^2(X) =3\sigma(X) + 2\chi(X) = 3\sigma (X) +2k. 
\]

For the second inequality, let us consider an $m$-fold (unbranched) covering 
\[
\phi_m \colon T^2 \to T^2
\]
of the base torus $T^2$ and pull back the Lefschetz fibration $f\colon
X \to T^2$  along this map: 
\[
\xymatrix{
\phi_m^*(X) \ar[r]\ar[d]^{f_m} & X \ar[d]^{f} \\
T^2 \ar[r]^{\phi_m} & T^2
}
\]

The Euler characteristic, the signature 
and the number of singular fibers all multiply by $m$, hence we get:
\begin{itemize}
\item[(1)] $\chi(\phi_m^*(X))  =  m k = m n + m s$
\item[(2)] $b_1(\phi_m^*(X)) \le 2 g + 2$
\item[(3)] $\chi(\phi_m^*(X)) = 2 -2b_1(\phi_m^*(X)) +
  b_2(\phi_m^*(X))=m\chi (X) =mk$
\item[(4)] $\sigma(\phi_m^*(X)) = m \sigma(X)$.
\end{itemize}
Therefore 
\begin{align*}
&\sigma(\phi_m^*(X)) = b_2^+(\phi_m^*(X)) - b_2^-(\phi_m^*(X)) = b_2(\phi_m^*(X)) - 2 b_2^-(\phi_m^*(X))= \label{Equation:Signature}\\
&= \chi(\phi_m^*(X)) - 2 + 2 b_1(\phi_m^*(X))  - 2b_2^-(\phi_m^*(X)) \le mk + 4g + 2 -2b_2^-(\phi_m^*(X)), \notag
\end{align*}
and we will estimate $b_2^-(\phi_m^*(X))$.

In the mapping class group of the fiber,
let us denote $yxy^{-1}$ by $[x]^y$. We will consider $a_i$, $b_i$ as
a right handed Dehn twists along corresponding simple closed curves on
the fiber $F$. In the following we use the function notation for a product
in the mapping class group, \emph{i.e.}, $ab = a\circ b$ means that $b$
is applied first and then we apply $a$.  For simplicity we will use
the same letter for a Dehn twist and its corresponding isotopy class
of simple closed curve.

Since the 4-manifold $X$ admits a Lefschetz fibration over $T^2$, it
is characterized by the equivalence class of the monodromy
factorization of the fibration of the form:
\[
a_1 a_2 \cdots a_s b_1 b_2 \cdots b_n = [\gamma, \delta]
\]
for some right handed Dehn twists $a_i$ $(i=1, \cdots, s)$ along
separating vanishing cycles and right handed Dehn twists $b_j$ ($j=1,
2, \cdots, n$) along nonseparating vanishing cycles, and some elements
$\gamma$, $\delta$ in the mapping class group of the fiber $F$.

The monodromy factorization of the Lefschetz fibration $\phi_m^*(X)$ has
the form 
\[
\tilde{a}_1 \tilde{a}_2 \cdots \tilde{a}_{ms} \tilde{b}_1 \tilde{b}_2 \cdots \tilde{b}_{mn} = [\gamma_m, \delta_m]
\]
for some right handed Dehn twists  $\tilde{a}_i$ ($i=1, \cdots, ms$) along separating vanishing cycles and right handed Dehn twists $\tilde{b}_j$ ($j=1, \cdots, mn$)  along  nonseparating vanishing cycles and some elements $\gamma_m$, $\delta_m$ in the mapping class group of the fiber $F$.

Each separating vanishing cycle gives rise to an embedded surface
(hence a second homology element $A_i$) of self-intersection $(-1)$
which is part of the corresponding singular fiber. The Lefschetz
fibration $\phi_m^*(X)$ therefore has $ms$ many such disjoint surfaces
\[
\{ A_1, A_2, \cdots, A_{ms} \}.
\]

Now we claim that there is a surface of negative self intersection in a neighbourhood of singular fibers $\prod_{i=1}^{2g+1} \tilde{b}_{(2g+1)j + i}$ for each $j=0, \cdots, [\frac{mn}{2g+1}] -1$. 

Since $\dim H_1(F; \mathbb{Z}) = 2g$, 
\[
\{ \tilde{b}_{(2g + 1) j + 1},  \tilde{b}_{(2g + 1) j + 2}, \cdots,  \tilde{b}_{(2g + 1) (j + 1)}\}
\]
is linearly dependent for each $j=0, \cdots, [\frac{mn}{2g+1}]-1$. 
For each $j$ all argument will be the same, hence to keep the notation simple, we assume $j=0$.
Obviously we can find a positive integer $\ell \le 2g +1$ such that 
\[
\{ \tilde{b}_1, \tilde{b}_2, \cdots, \tilde{b}_{\ell -1}\}
\]
is linearly independent and 
\[
\{ \tilde{b}_1, \tilde{b}_2, \cdots, \tilde{b}_{\ell}\}
\]
is linearly dependent. Then we can find some integers $n_v$ such that
\[
\sum_{v=1}^{\ell-1} n_v \tilde{b}_v +  n_\ell \tilde{b}_\ell = 0, \ n_\ell \ne 0
\]
in $H_1(F; \mathbb{Z})$.
Since this implies the same identity in
$H_1(\partial (F\times D^2); \mathbb{Z})$, there is an element $\alpha
\in H_2(F\times D^2, \partial(F\times D^2); \mathbb{Z})$ which can be
represented by a surface $S_{\alpha}$ in $F\times D^2$ with $\sum_{v=1}^{\ell} \vert n_v \vert$
boundary components, $\vert n_v \vert$ copies of $\tilde{b}_v$ for each $v=1, \cdots, \ell$, all located in different fibers. 
Using the Lefschetz thimbles of
the singular fibers in $X$ corresponding to $\tilde{b}_1, \tilde{b}_2, \cdots, \tilde{b}_\ell$ we
get a closed, oriented surface $S$ in $X$.

Next we show that the self-intersection of $S$ is negative. 
Decompose $S$ as $S_{\alpha}$ together with the $\sum_{v=1}^{\ell} \vert n_v \vert$ Lefschetz thimbles.
When framing the $\sum_{v=1}^{\ell} \vert n_v \vert$ boundary circles of $S_{\alpha}$ with the framing 
these circles get from the fiber of the fibration, the self-intersection
$[S]^2$ can be computed as the sum of $[S_{\alpha}]^2$ (with respect to
the above framing at the boundary) together with the self-intersections
of the thimbles (again, with the same framings at their boundaries).
The relative self-intersection of the Lefschetz
thimble is equal to $(-1)$, hence the contribution of the $\sum_{v=1}^{\ell} \vert n_v \vert$ thimbles
in our case is $-\sum_{v=1}^{\ell} n_v^2 $ (since $\tilde{b}_v$ appears $\vert n_v\vert$ times, hence this
thimble is used $\vert n_v\vert$ times).

In computing the self-intersection of the surface-with-boundary
$S_{\alpha}\subset F\times D^2$ (with the framings along the $\sum_{v=1}^{\ell} \vert n_v \vert$
boundary components as fixed above) we argue as follows.  We attach
$\ell$ two-handles to $F\times D^2$ along the simple closed curves $\tilde{b}_1$,
$\tilde{b}_2$, $\cdots$, $\tilde{b}_\ell$, with coefficient $0$ (measured with respect
to the framing given above).  The resulting 4-manifold $\bar{X}$
satisfies $H_2(\bar{X};\mathbb{Z}) = \mathbb{Z}\oplus \mathbb{Z}$: 
the first $(\ell-1)$ handles are attached along homologically nontrivial and
linearly independent curves, hence reduce $H_1$ and do not change
$H_2$, while the last 2-handle is attached along a curve
homologically depending on the first $(\ell-1)$ curves, hence this last
attachment increases the rank of $H_2$ by one.  
The two generators of
$H_2(\bar{X};\mathbb{Z})$ can be given by $F\times \{ 0\}$ and the
capped-off surface we get from $S_{\alpha}$, and which we will denote
by $\Sigma$.  These classes can be represented by disjoint surfaces,
since $S_{\alpha}$ can be pushed into $\partial (F\times D^2)$;
consequently $[F\times \{ 0\}]\cdot [\Sigma ]=0$.  The
self-intersection of the homology class represented by $F\times \{
0\}$ is also zero, hence the sign of the self-intersection of $\Sigma$
is equal to the sign of the signature of $\bar{X}$.  
The signature
$\sigma(\bar{X})$, however, is easy to compute: the first $(\ell-1)$
attachments do not change $\sigma (F\times D^2)=0$ (since they do not
change $H_2(F\times D^2; \mathbb{Z} )$ either), and by Wall's nonadditivity
of signature (\cite[Theorem~3]{Ozbagci:PJM-2002}) the last handle
attachment also leaves $\sigma$ unchanged, since $\vert n_\ell \vert$ copies of the simple closed
curve corresponding to the $0$-framing in the $2$-handle attachment
along $\tilde{b}_\ell$ bounds a surface (cf. the formula in
\cite{Ozbagci:PJM-2002}).)  This shows that the self-intersection of
$\Sigma$ is zero in $\bar{X}$, hence the self-intersection of
$S_{\alpha}$ in $F\times D^2$ (with the framings along the boundary
components as discussed above) is $0$, implying that $S\subset X$ is a
surface of self-intersection $-\sum_{v=1}^{\ell} n_v^2 $. 

The surface $S$ is obviously disjoint from the $A_i$'s (coming from separating vanishing cycles), 
and the $S$'s for different $j$'s are also disjoint.
Therefore we get
\[
b_2^-(\phi_m^*(X)) \ge ms + [\frac{mn}{2g+1}],
\]
implying
\[
m \sigma(X) \le m k -2  + 2(2g + 2) - 2(ms + [\frac{mn}{2g+1}])
\]
for each positive integer $m$. This implies
\[
\sigma(X) \le k -2s -\frac{2n}{2g+1},
\]
and since $\sigma(X)$ is an integer, together with the fact that $n>0$ (Theorem~\ref{Theorem:Braungardt-Kotschick}) we get
\[
\sigma(X) \le k -2s -1.
\]
\end{proof}

\begin{proof}[Proof of Theorem~\ref{Theorem:LowerBound}]
Suppose that there is a genus-$g$ Lefschetz fibration $f\colon X \to T^2$ with
$k=n+s$ singular fibers. Then, the
following conditions have to be satisfied:
\begin{align}
& -\frac{2}{3}k \le \sigma(X) \le n-s-1 \text{ (Proposition~\ref{Proposition:SigmaEstimate})},
 \label{Equation:C1}\\
& 4 | (\sigma(X) + \chi(X) ) \text{ (by the existence of 
an almost complex structure on $X$)},
\label{Equation:C2} \\
& s \le 5n \text{ (Theorem~\ref{Theorem:Braungardt-Kotschick})}. \label{Equation:C3}
\end{align}

 It is known that $N(1,1) = 12$
\cite{Matsumoto:1986}, $N(g, 1) >1$ for $g \ge 1$
\cite{Korkmaz-Ozbagci:PAMS-2001}, and $N(2,1) \ge 6$ because
\eqref{Equation:C3} implies $n>0$ \cite{Matsumoto:1996,
  Monden:TokyoJ-2012}.  Notice that $N(g,1)>1$ also follows from
\eqref{Equation:C1}: if $k=1$ then \eqref{Equation:C1} implies
$-\frac{2}{3} \le \sigma(X) \le 0$, so $\sigma(X) =0$, contradicting \eqref{Equation:C2}.

For $k=2$, we also use \eqref{Equation:C2}: there is no integer $\sigma (X)$
satisfying
\[
-\frac{4}{3} \le \sigma(X) \le 2 - 2s -1= 1-2s
\]
and $4 \vert (\sigma(X)+2)$.
\end{proof}

\begin{remark}
If there is a Lefschetz fibration $X$ over $T^2$ with
$k = 3$, then by \eqref{Equation:C1} and \eqref{Equation:C2} we have $\sigma(X) = 1$, $\chi(X) = 3$ and  $(n,s)=(3,0)$. 
As already pointed out in Remark~\ref{Remark:Catanese}, such a fibration has been claimed in ~\cite{Cartwright:2014aa} for fiber genus $19$, based on the construction of a complex surface $S$ in ~\cite{Cartwright-Steger:2010}.
\end{remark}

\section{Upper bounds on $N(g,1)$}

Hamada proved the following upper bounds for $N(g, 1)$, $g \ge 3$.

\begin{theorem}[\cite{Hamada:MMJ-2014}]\label{Theorem:Hamada}
For the minimal number $N(g,1)$ of singular fibers in a genus-$g$
Lefschetz fibration over the torus we have
\begin{itemize}
\item[(1)] $N(g, 1) \le 6$ for $g \ge 3$,
\item[(2)] $N(g, 1) \le 4$ for $g \ge 5$. \qed
\end{itemize}
\end{theorem}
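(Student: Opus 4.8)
The plan is to prove both bounds by construction: to show $N(g,1)\le k$ it suffices to exhibit right-handed Dehn twists $c_1,\dots,c_k$ along simple closed curves in $\Sigma_g$ together with mapping classes $\phi,\psi\in\mathcal M_g$ satisfying $c_1c_2\cdots c_k=[\phi,\psi]$, since such a positive factorization of a single commutator is exactly the monodromy data of a genus-$g$ Lefschetz fibration over $T^2$ with $k$ singular fibers. One then has to check that the resulting total space is relatively minimal and that the fibration really has $k$ (and not fewer) singular fibers. So the whole problem reduces to producing a positive factorization of a single commutator in $\mathcal M_g$ with as few letters as possible.

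Rather than building such a word from scratch, I would graft it onto known relations, namely the $8$-holed torus relation of Korkmaz--Ozbagci~\cite{Korkmaz-Ozbagci:MMJ-2008} and the Matsumoto--Cadavid--Korkmaz relation~\cite{Cadavid:Thesis-1998, Korkmaz:IMRN-2001, Matsumoto:1996}. The $8$-holed torus relation lives in the mapping class group of a genus-$1$ surface with $8$ boundary components, and is arranged so that, after capping off some boundary circles with disks and gluing others in pairs, the surviving genus-$1$ part supplies the commutator $[\phi,\psi]$ needed for a torus base while the glued handles build up the fiber genus. Combining this with the MCK relation and stabilizing (embedding a fixed model $\Sigma_{g_0}$ into $\Sigma_g$ and adjoining the twists needed to keep the word a commutator) should give, for every $g$ in the relevant range, a positive factorization of a commutator in $\mathcal M_g$ of some moderate length $N$, with many of the vanishing cycles sitting inside a common small subsurface.

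The length $N$ is then cut down by substitution moves. Each application of the lantern relation replaces four positive twists that bound a $4$-holed sphere by three, lowering the length by one; for $g\ge5$ there is enough embedding room to also use the longer ``daisy''/star relations, which trade a bigger configuration for a much shorter one, and it is this extra room that should bring the count all the way down to $4$, whereas for $g=3,4$ the available subsurfaces are too small and one only reaches $6$. For the smallest genera one can alternatively start directly from a hyperelliptic Matsumoto-type relation. All along, the inequality $s\le5n$ of Theorem~\ref{Theorem:Braungardt-Kotschick} must be honoured, since lantern substitutions introduce separating vanishing cycles.

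The main obstacle is twofold. First, the arithmetic: finding the precise seed relation and the exact sequence of lantern and daisy substitutions so that the final length is exactly $6$ (resp.\ $4$) is a genuine combinatorial search inside $\mathcal M_g$ --- there is no formal reason it must land on these values, and this is where the real content of the theorem lies. Second, the bookkeeping: after all the substitutions one must still verify that the resulting word is a single commutator rather than merely a product of commutators, that the final collection of curves is generic enough for the fibration to be relatively minimal with exactly the stated number of singular fibers, and that the construction is uniform in $g$, i.e.\ that the passage from the base cases to all $g$ in each range does not increase the number of singular fibers.
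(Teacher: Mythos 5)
This theorem is not proved in the paper at all: it is quoted from Hamada \cite{Hamada:MMJ-2014} with only a one-sentence indication of the ingredients (the Matsumoto--Cadavid--Korkmaz word for the bound $6$, the $8$-holed torus relation of \cite{Korkmaz-Ozbagci:MMJ-2008} for the bound $4$), so the comparison is really with Hamada's argument. You have correctly identified those two seed relations, but your proposal is a plan rather than a proof, and you say so yourself: the step where the numbers $6$ and $4$ are actually produced is deferred to ``a genuine combinatorial search'' with ``no formal reason it must land on these values.'' That deferred step is the entire content of the theorem, so there is a genuine gap.

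More substantively, the mechanism you propose for shortening the word --- lantern and daisy substitutions --- is not the lever that yields these bounds, and it points the construction in the wrong direction (toward fibrations over $S^2$, where such substitution techniques are standard). The numbers $6$ and $4$ arise instead from a \emph{section-to-commutator} trade: one starts from a relation of the form $t_{\delta_1}\cdots t_{\delta_n}=c_1\cdots c_k$ in the mapping class group of a surface with boundary (a fibration over the disk with $n$ disjoint sections), embeds that surface in $\Sigma_g$ so that the boundary curves become nonseparating, and then uses a single diffeomorphism $\psi$ to pair the boundary twists with some of the $c_i$ so that their combined contribution is one commutator; what survives is a positive factorization of a commutator whose length is the length of the positive word minus the number of twists absorbed (for the $8$-holed torus relation, $12-8=4$). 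The thresholds $g\ge 3$ and $g\ge 5$ are exactly the genera needed to embed the relevant holed surface so that such a pairing exists --- they are not the output of a search. This is precisely the mechanism the present paper carries out in Proposition~\ref{Proposition:Lifting-1} and the proof of Theorem~\ref{Theorem:UpperBound}, where the lift to $\mathcal{M}_2^2$ supplies the boundary twists $\delta_0,\delta_2$ and the identity $x_3x_4y_1y_2y_3=[\delta_0x_2^{-1},\psi]$ converts them, together with $x_1,x_2$, into a single commutator; see also \cite{Korkmaz-Ozbagci:PAMS-2001, EKKOS:Topology-2002}. Finally, the inequality $s\le 5n$ is a necessary condition satisfied by every Lefschetz fibration over $T^2$, not a constraint one must actively ``honour'' while constructing the word.
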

The proof of this result uses the Matsumoto-Cadavid-Korkmaz word 
\cite{Cadavid:Thesis-1998, Korkmaz:IMRN-2001, Matsumoto:1996}, implying
$N(g, 1) \le 6$ for $g\geq 3$,  and the $8$-holed torus relation (originally
obtained by Korkmaz and Ozbagci \cite{Korkmaz-Ozbagci:MMJ-2008}), implying
$N(g, 1) \le 4$ for $g\geq 5$.

By modifying the $4$-chain relation, Baykur and Korkmaz
\cite{Baykur-Korkmaz:arxiv-2015} found an interesting relation in
$\mathcal{M}_2^1$.  In the following we lift the Baykur-Korkmaz
relation (a relation which is obtained from
Equation~\eqref{Equation:Baykur-Korkmaz} below by capping off the boundary
circle $\delta_0$ with a disk) to $\mathcal{M}_2^2$, and using this
lift we improve the upper bounds for $N(3,1)$ and $N(4,1)$. This
result was predicted by Hamada and a sketch is given in
\cite{Hamada:MMJ-2014}.

\begin{proposition}\label{Proposition:Lifting-1}
There is a relation
\begin{equation}\label{Equation:Baykur-Korkmaz}
x_1 x_2 x_3 x_4 y_1 y_2 y_3 = \delta_0 \delta_2
\end{equation}
in the mapping class group $\mathcal{M}_2^2$ of the genus-$2$ surface with
two boundary components $\delta_0$ and $\delta_2$, where $x_i$ are
nonseparating vanishing cycles and $y_i$ are separating vanishing
cycles.
\end{proposition}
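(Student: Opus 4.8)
First I would record explicitly, following \cite{Baykur-Korkmaz:arxiv-2015}, the Baykur--Korkmaz relation in $\mathcal{M}_2^1$: on a fixed model of the genus--$2$ surface with one boundary circle $\delta_2$, draw the four nonseparating curves $\bar x_1,\dots,\bar x_4$ and the three separating curves $\bar y_1,\bar y_2,\bar y_3$, and keep track of the chain of elementary moves --- the $4$--chain relation followed by the lantern substitutions used to shorten it --- that produces $\bar x_1\bar x_2\bar x_3\bar x_4\,\bar y_1\bar y_2\bar y_3=\delta_2$. The plan is to reproduce this construction inside $\Sigma_2^2$ in such a way that the factor $\delta_0\delta_2$ on the right is present throughout, rather than recovered at the end.

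To see that this is the right shape, let $\Phi\colon\mathcal{M}_2^2\to\mathcal{M}_2^1$ be the capping homomorphism gluing a disk to $\delta_0$. Combining the Birman exact sequence with the central $\mathbb{Z}$--extension coming from capping a once--marked disk, $\ker\Phi$ is generated by the boundary twist $\delta_0$ together with the disk--pushing maps along loops in $\Sigma_2^1$. Thus for any lifts $x_i,y_j$ of the $\bar x_i,\bar y_j$ the product $x_1\cdots y_3$ lies in $\delta_2\cdot\ker\Phi$; and if the $x_i,y_j$ are simply the $\bar x_i,\bar y_j$ pushed into the complement of an embedded disk, then $x_1\cdots y_3=\delta_2$ on the nose, with no $\delta_0$--factor at all. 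Hence the twist $\delta_0$ cannot be extracted by moving curves around --- it must be manufactured by the relation itself. The device for this is to begin from a relation in $\mathcal{M}_2^2$ that already carries $\delta_0\delta_2$, for instance the chain relation $(c_1c_2c_3c_4c_5)^6=\delta_0\delta_2$ for a chain $c_1,\dots,c_5$ of curves filling $\Sigma_2^2$ (or, alternatively, a lantern or star relation on a subsurface meeting both boundary circles); under $\Phi$ such a relation maps to a corresponding relation in $\mathcal{M}_2^1$, from which the Baykur--Korkmaz relation is in turn obtained by a sequence of lantern substitutions.

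The core of the argument is then to carry out, step by step in $\mathcal{M}_2^2$, the lantern and chain substitutions of Baykur and Korkmaz. Each is supported on a four--holed sphere (for a lantern) or on a small subsurface (for a chain move), and one has to verify that at each stage this subsurface can be placed in $\Sigma_2^2$ disjointly from $\delta_0$ and $\delta_2$ and compatibly with the previous steps; then the substitution is valid verbatim and leaves the right--hand side $\delta_0\delta_2$ untouched. When the process terminates, the left--hand side has become $x_1x_2x_3x_4\,y_1y_2y_3$ for explicit simple closed curves $x_i,y_j\subset\Sigma_2^2$, and two checks complete the proof: $\Phi$ carries the whole relation to the Baykur--Korkmaz relation, so the $x_i,y_j$ are indeed lifts of the $\bar x_i,\bar y_j$ and in particular no $y_j$ has degenerated into a nullhomotopic or boundary--parallel curve; and a direct look at the curves confirms that $x_1,\dots,x_4$ are nonseparating and $y_1,y_2,y_3$ are separating in $\Sigma_2^2$.

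The step I expect to be the main obstacle is exactly this combinatorial one --- faithfully tracking the isotopy classes of all the curves through the entire substitution sequence once it is run in the larger surface, keeping every supporting subsurface clear of the two boundary circles, and confirming that the final word has the stated separating/nonseparating pattern. A subsidiary point that requires care is showing that the right--hand side is genuinely $\delta_0\delta_2$, and not $\delta_0^{a}\delta_2$ for some other integer $a$: in the approach above this is automatic, since it is inherited from the parent chain relation and never altered, but it would be the crux of any argument that instead fixed the lifts first and only afterwards computed the boundary word.
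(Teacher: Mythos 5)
Your overall strategy --- redo the Baykur--Korkmaz derivation inside $\Sigma_2^2$ starting from a relation that already carries the boundary word --- is the right one in spirit, and is close to what the paper does. But there is a genuine gap, and it sits exactly where you declare the matter ``automatic.'' You want every substitution to be supported away from $\delta_0$, so that the right-hand side $\delta_0\delta_2$ is inherited unchanged from the parent relation. This cannot be arranged for the derivation that actually produces the Baykur--Korkmaz word. That word comes from the $4$-chain relation $(1234)^{10}=\delta_2$ in $\mathcal{M}_2^1$, and the natural lift of this relation to $\mathcal{M}_2^2$ --- obtained by splitting the curve $1$ into two curves $1_a,1_b$ lying on either side of $\delta_0$ --- is $(1_a\,2\,3\,4\,1_b\,2\,3\,4)^5=\delta_0^{3}\delta_2$, with exponent $3$ on $\delta_0$, not $1$. (Establishing this lift is itself a nontrivial step; the paper does it by combining the $3$-chain relation $(1_a 2 1_b)^4=\delta_0\delta_1$ with an Alexander-method verification that a filling system of nine curves and two arcs is fixed by the candidate word.) The subsequent reverse-engineering steps then necessarily invoke the substitution $(1_a1_b)^2(2 1_a 1_b 2)^2=\delta_0\delta_1$, whose supporting subsurface has $\delta_0$ as a boundary component: this manufactures a $\delta_0$ on the left that is cancelled against the $\delta_0^{3}$ on the right, and that cancellation is precisely how the final exponent becomes $1$. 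So the ``subsidiary point'' you set aside --- that the answer is $\delta_0\delta_2$ rather than $\delta_0^{a}\delta_2$ --- is the crux, and your mechanism for handling it (keep every support clear of $\delta_0$) is incompatible with the derivation. Your proposed alternative starting point, the $5$-chain relation $(c_1\cdots c_5)^6=\delta_0\delta_2$, does have the desired boundary word, but under capping it projects to $(c_1\cdots c_5)^6=\delta_2$, which is not the relation Baykur and Korkmaz start from; there is no reason their substitution sequence can be run from it, so you would first have to establish an entirely different derivation of their word.

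Beyond this structural issue, the proposal defers the whole computation: ``carry out, step by step, the lantern and chain substitutions and verify the supports'' is the entire mathematical content of the statement, and none of it is executed --- no curves are exhibited, no substitution is checked, and the separating/nonseparating pattern of the final word is only promised. A smaller slip: the claim that pushing the curves into the complement of an embedded disk yields $x_1\cdots y_3=\delta_2$ ``on the nose'' is false in general --- a word in twists along curves missing the disk can acquire a nontrivial element of $\ker\Phi$ (the lantern relation is the standard example), which is the same misconception about boundary multiplicities resurfacing. For comparison, the paper, after proving the lifted chain relation, follows the Baykur--Korkmaz steps with explicit Hurwitz moves and one cyclic permutation and writes down the seven curves explicitly; the separating/nonseparating check is then immediate, since each $x_i$ is a conjugate of the chain curves $2$ or $3$ and each $y_j$ is a conjugate of $\delta_1$ or $\delta_3$.
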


\begin{proof}
\begin{figure}[htb]
\begin{tikzpicture}[scale=0.65]

\draw [blue] (0,4) to [out=240, in =120] (0,0);
\draw [blue] (0,4) to [out=300, in =60] (0,0)  node [above] at (0,4) {$\delta_0$};
\draw[dashed, blue] (10,4) to [out=240, in =120] (10,0) node [above] at (10,4) {$\delta_2$};
\draw [blue] (10,4) to [out=300, in =60] (10,0);
\draw [dashed, blue] (5,4) to [out=240, in =120] (5,0) node [above] at (5,4) {$\delta_1$};
\draw [blue] (5,4) to [out=300, in =60] (5,0);
\draw [thick] (3,2) circle [radius=0.5];
\draw [thick] (7,2) circle [radius=0.5];
\draw [blue] (3,2) circle [radius=1] node [left] at (2,2) {$2$};
\draw [blue] (7,2) circle [radius=1] node [left] at (8,2) {$4$};
\draw [blue] (3.5,2) to [out=30, in =150] (6.5,2) node [below] at (5,2.5) {$3$};
\draw[dashed, blue] (3.5,2) to [out=330, in =210] (6.5,2);
\draw[dashed, blue] (3,4) to [out=240, in =120] (3,2.5) node [above] at (3.3,4) {$1_a$};
\draw [blue] (3,4) to [out=300, in =60] (3,2.5);
\draw[dashed, blue] (3,1.5) to [out=240, in =120] (3,0);
\draw [blue] (3,1.5) to [out=300, in =60] (3,0) node [below] at (3.3,0) {$1_b$};
\draw[dashed, blue] (7,4) to [out=240, in =120] (7,2.5) node [above] at (7,4) {$c$};
\draw [blue] (7,4) to [out=300, in =60] (7,2.5);
\draw[dashed, blue] (7,1.5) to [out=240, in =120] (7,0);
\draw [blue] (7,1.5) to [out=300, in =60] (7,0) node [below] at (7,0) {$d$};
\draw [thick, red] (0,4)--(10,4) node [above] at (1,4) {$\alpha$};
\draw [thick, red] (0,0)--(10,0) node [below] at (1,0) {$\beta$};

\draw[dashed] (3.25,2.4) to [out=30, in =180] (7,3.5);
\draw[dashed] (7,3.5) to [out=0, in =90] (8.5,2);
\draw[dashed] (3.25,1.6) to [out=330, in =180] (7,0.5);
\draw[dashed] (7,0.5) to [out=0, in =270] (8.5,2);

\draw (3.25,2.4) to [out=40, in =180] (7,3.7);
\draw (7,3.7) to [out=0, in =90] (8.7,2);
\draw (3.25,1.6) to [out=320, in =180] (7,0.3);
\draw (7,0.3) to [out=0, in =270] (8.7,2) node [right] at (8.7,2) {$\delta_3$};;

\end{tikzpicture}
\caption{Curves for the chain relation}\label{Figure:Circles}
\end{figure}
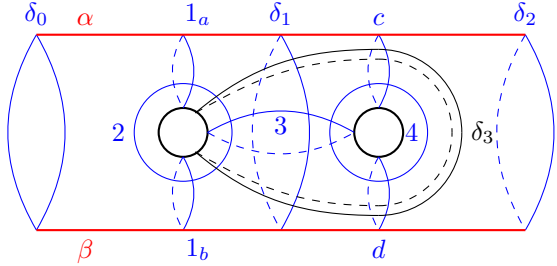

By using a sequence of braid relations, we get
\begin{align*}
k 1_a 2 3 4 &= 1_a 2 3 4 \ell, &  k 1_b 2 3 4 &= 1_b 2 3 4 \ell, & 2 1_a 2 3 4 &= 1_a 2 3 4  1_a, & 2 1_b 2 3 4 &= 1_b 2 3 4 1_b,   \\
\ell 4 3 2 1_a &= 4 3 2 1_a k, & \ell 4 3 2 1_b &= 4 3 2 1_b k, & 1_a 4 3 2 1_a &= 4 3 2 1_a 2, & 1_b 4 3 2 1_b &= 4 3 2 1_b 2 , 
\end{align*}
where $k \in \{3, 4\}$ and $k-\ell = 1$. 
This implies that the elements of the set  
$\{ (1_a 1_b), (2 1_a 1_b 2), ( 3 2 1_a 1_b 2 3), ( 4 3 2 1_a 1_b 2 3 4)\}$ commute with each other and
\begin{align*}
(1_a 2 3 4 1_b 2 3 4)^{2} (1_a 2 3 4) &=  1_a 2 3 1_b 2 3 1_a 2 3 1_b 2 3 4 3 2 1_b 1_a 2 3 4 \\
& = 1_a 2  1_b 2  1_a 2 3 2 1_a 1_b 2 3 4 3 2 1_b 1_a 2 3 4\\
&= (1_a  1_b) (2 1_a 1_b 2)( 3 2 1_a 1_b 2 3)( 4 3 2 1_a 1_b 2 3 4),\\
(1_b 2 3 4)(1_a 2 3 4 1_b 2 3 4)^{2} &= (1_a  1_b) (2 1_a 1_b 2)( 3 2 1_a 1_b 2 3)( 4 3 2 1_a 1_b 2 3 4).
\end{align*}
Therefore 
\[
(1_a 2 3 4 1_b 2 3 4)^{5} = (1_a  1_b)^2  (2 1_a 1_b 2)^2 ( 3 2 1_a 1_b 2 3)^2 ( 4 3 2 1_a 1_b 2 3 4)^2.
\]

First we prove the following relation in $\mathcal{M}_2^2$:
\begin{equation}\label{Equation:Lift}
(1_a 2 3 4 1_b 2 3 4)^{5} = \delta_0^3 \delta_2.
\end{equation}
(Note that if we attach a disk along $\delta_0$, then the above
relation reduces to the usual $4$-chain relation $(1234)^{10} =
\delta_2$, hence we can regard it as a lift of the $4$-chain relation to
$\mathcal{M}_2^2$.)  The $3$-chain relation implies
\begin{equation}\label{Equation:3-Chain}
 (1_a 1_b)^2 (2 1_a 1_b 2)^2 = \big( (1_a 1_b) (2 1_a 1_b 2) \big)^2 = (1_a 2 1_b)^4 = \delta_0 \delta_1.
\end{equation}
Since the genus-$2$ surface $\Sigma_2^2$ with two boundary components can be decomposed into eight hexagons by cutting it along seven simple closed curves $1_a$, $1_b$, $2$, $3$, $4$, $c$, $d$ and two arcs $\alpha$, $\beta$ as in Figure~\ref{Figure:Circles}, 
we will prove 
\begin{equation}\label{Equation:Rel-2}
( 3 2 1_a 1_b 2 3)^2 ( 4 3 2 1_a 1_b 2 3 4)^2 = \delta_0^2 \delta_1^{-1} \delta_2
\end{equation}
by showing that nine circles $1_a$, $1_b$, $2$, $3$, $4$, $c$, $d$, $\delta_0$, $\delta_2$ and two arcs $\alpha$, $\beta$ are fixed (up to isotopy) under the map 
\begin{equation}\label{Equation:Rel}
(\delta_0^2 \delta_1^{-1} \delta_2)^{-1} ( 3 2 1_a 1_b 2 3)^2 ( 4 3 2 1_a 1_b 2 3 4)^2 .
\end{equation}
\begin{itemize}
\item $\delta_0$ and $\delta_2$ are clearly fixed under the map \eqref{Equation:Rel}.
\item $c$ and $d$ are fixed under $( 4 3 2 1_a 1_b 2 3 4)^2$ and  $(3 2 1_a 1_b 2 3)$, 
  hence are fixed under the map \eqref{Equation:Rel}.
\item $(3 2 1_a 1_b 2 3)(4 3 2 1_a 1_b 2 3 4)$ maps $1_a$, $1_b$ and $2$ to themselves with the same orientation. So $1_a$, $1_b$ and $2$ are fixed under the map \eqref{Equation:Rel}.  
\item $(3 2 1_a 1_b 2 3) (4 3 2 1_a 1_b 2 3 4)$ maps $4$ to itself but with opposite orientation. 
  So $4$ is fixed under the map $(3 2 1_a 1_b 2 3)^2 (4 3 2 1_a 1_b 2 3 4)^2$ and therefore by \eqref{Equation:Rel}.  
\item $ (4 3 2 1_a 1_b 2 3 4)$ maps the simple closed curve $3$ to itself
  but with opposite orientation. The image of the simple closed curve $3$
  under the map $(3 2 1_b 1_a 2 3)^2$ is  $\delta_1^{-1}(3)$.
  Therefore $3$ is fixed under the map \eqref{Equation:Rel}.
\item $(3 2 1_a 1_b 2 3)^2 (4 3 2 1_a 1_b 2 3 4)^2$ maps $\alpha$ to $\delta_0^2 \delta_1^{-1} \delta_2(\alpha)$ and $\beta$ to $\delta_0^2 \delta_1^{-1} \delta_2(\beta)$. So $\alpha$ and $\beta$ are fixed under the map \eqref{Equation:Rel}.
\end{itemize}
Equations \eqref{Equation:3-Chain} and \eqref{Equation:Rel-2} then
imply Equation \eqref{Equation:Lift}. 

Now we will check that there is a lift of the Baykur-Korkmaz word to
$\mathcal{M}_2^2$.  
By following the steps of the
reverse engineering method applied by Baykur and Korkmaz in
\cite{Baykur-Korkmaz:arxiv-2015}, we get
\begin{align*}
& (3 2 1_a 1_b 2 3)^2 ( 4 3 2 1_a 1_b 2 3 4)^2 = (3 2 1_a 1_b 2 3) (4 3 2 1_a 1_b 2 3 4)^2 (3 2 1_a 1_b 2 3) =\\
&= 3 4 2 1_a 1_b 3 2 3 1_a 1_b 2 (3 4)^3 2 1_a 1_b 2 3 2 1_a 1_b 2 4 3.
\end{align*}
This implies
\begin{align*}
\delta_0^3 \delta_2 &= \delta_0 \delta_1 3 4 2 1_a 1_b 3 2 3 1_a 1_b 2 (3 4)^3 2 1_a 1_b 2 3 2 1_a 1_b 2 4 3,
\end{align*}
and since $4 3$ commutes with $\delta_0^3 \delta_2$, we can perform a
cyclic permutation; a sequence of Hurwitz moves then gives
\begin{align*}
\delta_0^3 \delta_2 &= \delta_0 [\delta_1]^{43}  4 3 3 4 2 1_a 1_b 3 2 3 1_a 1_b 2 (3 4)^3 2 1_a 1_b 2 3 2 1_a 1_b 2 \\
&=  \delta_0 [\delta_1]^{43} [2]^{4334} [2]^{43341_a1_b3} [2]^{43341_a1_b 331_a1_b} (1_a1_b)^2 (2 1_a1_b2)^2 (34)^6 [3]^{(21_a1_b2)^{-1}} \\
&= \delta_0 [\delta_1]^{43} [2]^{4334} [2]^{43341_a1_b3} [2]^{43341_a1_b 331_a1_b} \delta_0 \delta_1 \delta_3 [3]^{(21_a1_b2)^{-1}}
\end{align*}
where $\delta_3$ is the right handed Dehn twist along the curve $\delta_3$ of Figure~\ref{Figure:Circles}.
Therefore
\[
\delta_0 \delta_2 = [3]^{(21_a1_b2)^{-1}} [\delta_1]^{43} [2]^{4334} [2]^{43341_a1_b3} [2]^{43341_a1_b 331_a1_b} \delta_1 \delta_3 .
\]

By choosing
\begin{align*}
x_1 &= [3]^{(21_a1_b2)^{-1}}, &   x_2  &= [2]^{4334}, &
x_3 &= [2]^{43341_a1_b3},  & x_4 &= [2]^{43341_a1_b 331_a1_b}, \\ 
y_1 &=  [\delta_1]^{(x_2x_3x_4)^{-1}43},  & y_2 &= \delta_1, & 
y_3 &= \delta_3, & &
\end{align*}
the conclusion follows.
\end{proof}

\begin{remark}
Notice that Relation~\eqref{Equation:Lift} shows the existence of a $(-3)$-section of the Lefschetz fibration
determined by the left-hand-side of the relation. This fibration was already discussed by Fuller
\cite[page~164]{Fuller:1998}.
\end{remark}

\begin{proof}[Proof of Theorem~\ref{Theorem:UpperBound}]
Let us cap off the two boundary components $\delta_0$, $\delta_2$ of
$\Sigma_2^2$ by a cylinder so that we get a closed surface $\Sigma_3$
of genus $3$. Then each vanishing cycle except $y_3$ is nonseparating
and $\Sigma_3 - \{ \delta_0, x_2\}$ and $\Sigma_3 -\{ \delta_2, x_1\}$
are connected, $\delta_0$ is disjoint from $x_2$, and $\delta_2$ is
disjoint from $x_1$.  As it is explained in \cite{EKKOS:Topology-2002,
  Korkmaz-Ozbagci:PAMS-2001}, there is a map $\psi \colon \Sigma_3 \to
\Sigma_3$ satisfying
\[
\psi(\delta_0) = x_1, \ \ \psi(x_2) = \delta_2,
\]
implying that
\[
x_3 x_4y_1 y_2 y_3  = [\delta_0 x_2^{-1}, \psi].
\]
So there is a genus-$3$ Lefschetz fibration over $T^2$ with $5$
singular fibers, $4$ of them are nonseparating and $1$ of them is
separating.

We get the same result for $g \ge 4$, since we can cap off the two
boundary components $\delta_0$ and $\delta_2$ by a twice punctured
genus-$(g-3)$ surface.
\end{proof}

As an application, we get

\begin{proof}[Proof of Corollary~\ref{Corollary:N}]
Theorems~\ref{Theorem:LowerBound}, \ref{Theorem:Hamada}
and~\ref{Theorem:UpperBound} immediately imply (1) and (2).

For (3), it is already known that $N(1,1)=12$ and $6\le N(2,1) \le 7$. So we only need
to show that there is no genus-$2$ Lefschetz fibration $X$ over $T^2$
with $6$ singular fibers. It is well known that $\mathcal{M}_2$ is the
hyperelliptic mapping class group, consequently $n + 2s \equiv 0
\pmod{10}$. Therefore the only possible case for $k=6$ is $(n, s) =
(2,4)$. But Proposition~\ref{Proposition:SigmaEstimate} excludes this
case because there is no $\sigma(X) \in \mathbb{Z}$ such that
\[
-\frac{12}{3} \le \sigma(X) \le n -s -1 = 2 - 4- 1 = -3,
\]
and $4 | (6 + \sigma(X))$. 
\end{proof}

\bibliographystyle{amsplain}

\providecommand{\bysame}{\leavevmode\hbox to3em{\hrulefill}\thinspace}
\providecommand{\MR}{\relax\ifhmode\unskip\space\fi MR }
\providecommand{\MRhref}[2]{%
  \href{http://www.ams.org/mathscinet-getitem?mr=#1}{#2}
}
\providecommand{\href}[2]{#2}

\end{document}